\newcommand{\norm}[1]{\left\lVert#1\right\rVert}
\begin{document}

\title*{ Distributed model independent algorithm for spacecraft synchronization under relative measurement bias}
\titlerunning{Spacecraft synchronization under relative measurement bias} 
\author{Himani Sinhmar and Sukumar Srikant}
\institute{Himani Sinhmar \at Undergraduate Student, Department of Aerospace Engineering, Indian Institute of Technology Bombay, India. \email{himani.sinhmar@gmail.com}
\and Sukumar Srikant \at Associate Professor, Systems and Control Engineering, Indian Institute of Technology Bombay, India. \email{srikant@sc.iitb.ac.in}}
%
%
\maketitle{}

\abstract*{This paper addresses the problem of distributed coordination control of spacecraft formation. It is assumed that the agents measure relative positions of each other with a non-zero, unknown constant sensor bias. The translational dynamics of the spacecraft is expressed in Euler-Lagrangian form. We propose a novel distributed, model independent control law for synchronization of networked Euler Lagrange system with biased measurements. An adaptive control law is derived based on Lyapunov analysis to estimate the bias. The proposed algorithm ensures that the velocities converge to that of leader exponentially while the positions converge to a bounded neighborhood of the leader positions. We have assumed a connected leader-follower network of spacecraft. Simulation results on a six spacecraft formation corroborate our theoretical findings.}

\abstract{This paper addresses the problem of distributed coordination control of spacecraft formation. It is assumed that the agents measure relative positions of each other with a non-zero, unknown constant sensor bias. The translational dynamics of the spacecraft is expressed in Euler-Lagrangian form. We propose a novel distributed, model independent control law for synchronization of networked Euler Lagrange system with biased measurements. An adaptive control law is derived based on Lyapunov analysis to estimate the bias. The proposed algorithm ensures that the velocities converge to that of leader exponentially while the positions converge to a bounded neighborhood of the leader positions. We have assumed a connected leader-follower network of spacecraft. Simulation results on a six spacecraft formation corroborate our theoretical findings. }

\section{Introduction}
\label{intro}
Spacecraft formation flying is one of the most important technological challenges for modern day space agencies with application to areas like synthetic aperture radars and deep space exploration \cite{JPL_page}. These missions require that spacecraft maintain a desired relative position and attitude at all times. In  synchronization problems \textit{consensus} is the significant objective and implies that all the agents reach an agreement on a common value by locally interacting with their neighbors. In distributed multi-agent coordination problems (distributed algorithm allows the agents to execute control law without requiring information of the network as a whole), point models are generally considered due to their simplicity but are not realistic. Euler--Lagrange equations can be used to model a large class of aero-mechanical systems including autonomous vehicles and spacecraft in formation \cite{weiren}. Networked Lagrangian systems are studied in detail in \cite{weiren}, where the authors propose consensus algorithms accounting for actuator saturation and for unavailability of measurements of generalized coordinates. In \cite{model_el} formation dynamics of spacecraft formation is discussed, describing the dynamics in Euler-Lagrangian form.

Distributed and model independent algorithms for directed networks in the presence of bounded disturbance is addressed in \cite{anderson}. In \cite{china} a control law to achieve finite time coordinated control for 6DOF spacecraft formation is developed. However, this algorithm is model dependent and requires the knowledge of self states of the agents, and further the gravitational and centrifugal forces acting on them.  A model dependent control law is designed in \cite{slotine} using contraction analysis for synchronization of spacecraft. In \cite{krogstad}, a synchronization controller for attitude and position control of a spacecraft formation is designed which rely on all to all communication topology. An algorithm for tracking of Lagrangian systems using only position measurements is developed in \cite{only_pos} by encompassing a distributed observer to estimate unknown velocity of the agents. An output feedback structured model reference adaptive control (MRAC) law has been developed for spacecraft rendezvous in \cite{singla}. However, their control law works well only in the presence of bounded disturbances and measurement errors. In \cite{ext_dist}, the coordination control problem of heterogeneous first and second order multi-agent systems with external disturbances is considered, but the disturbances are assumed to be $\mathbb{L}^2$ bounded. In \cite{nl_wang}, a composite consensus control strategy is proposed for second-order multi-agent systems with mismatched bounded disturbances. 

In the aforementioned literature on consensus with errors, adaptive control algorithms in the presence of an upper bound on disturbances and stochastic errors have been studied. But what happens to consensus in the presence of measurement errors with unknown bounds? The current work addresses this problem. Further, strategies for handling disturbance do not usually fare well for the case of measurement errors simply due to the fact that the measurement errors scale with the control gain while disturbances external to the system do not. This makes ensuring bounded trajectories with constant measurement bias a much harder problem than the disturbance robustness case.
The relevant contributions in the domain of measurement bias errors known to the authors are by \cite{tandon} and \cite{mrac}. While the former proposes an adaptive control law in the presence of constant bias for a double integrator system, the latter addresses the problem of accommodating unknown sensor bias in a direct MRAC setting for state tracking using state feedback. Motivated by the above work, we present a distributed model independent synchronization algorithm for a spacecraft network described in Euler-Lagrangian form and achieving consensus to a neighborhood in the presence of an unknown, unbounded and constant sensor bias in the measurement of relative position. 

\section{Preliminaries}
\label{prelim}
In this section we present several notations, lemmas, assumptions and an introduction on graph theory for subsequent use. 

\subsection{Mathematical Notations}
Given a vector \textbf{x} = $[x_1,...,x_n]^T$ $\in$ $\mathbb{R}^n$, $sgn(\mathbf{x})$ = $[sgn(x_1),...,sgn(x_n)]^T$, where $sgn(\cdot)$ is the standard signum function, $\mathbf{1}_n = [1,..,1]^T$ and $\mathbf{0}_n = [0,...,0]^T$. 
One-norm and Euclidean norm of a vector $\mathbf{x}$ are denoted by $\norm{\textbf{x}}_1$ = $\sum_{i=1}^{n} |x|_i$ and $\norm{\textbf{x}}$ = $(\sum_{i=1}^{n} |x|_i^2)^{\frac{1}{2}}$ respectively. A Diagonal matrix with diagonal elements as $d_1,d_2,...,d_n$ is represented by $diag(d_1,...,d_n)$ and a block diagonal matrix with diagonal matrices $B_1,...,B_n$ is represented by $blkdiag(B_1,...,B_n)$. A $n\times n$ identity matrix is denoted by $\vec{I}_{n \times n}$ and a matrix with all elements as zero is denoted by $\vec{0}_{n \times n}$. We use $\otimes$ to denote Kronecker product.

\subsection{Graph Theory}

Consider a multi-agent system with $n$ agents interacting with each other through a communication or sensing network or a combination of both. This network is modeled as either \textit{undirected} or \textit{directed} graph. We define the graph, $\mathcal{G} \triangleq (\mathcal{V},\mathcal{E})$, where $\mathcal{V} \triangleq {1,...,n}$ is a node set and $\mathcal{E} \subseteq \mathcal{V} \times \mathcal{V} $ is an edge set of nodes, called edges \cite{wr_prelim}. An edge $(i,j)$ in the edge set of a directed graph signifies that agent $j$ can obtain information from agent $i$ but not vice-versa. If an edge $(i,j) \in \mathcal{E}$, then node $i$ is a neighbor of node $j$. The set of neighbors of node $i$ is denoted by $\mathcal{N}_i$. In an undirected graph the pair of nodes are unordered, where the edge $(i,j)$ denotes that agents $i$ and $j$ can obtain information from each other, i.e. $(j,i) \in \mathcal{E} \Leftrightarrow (i,j) \in \mathcal{E}$. A weighted graph associates a weight with every edge in the graph. An undirected graph is connected if there is an undirected path between every pair of distinct nodes \cite{wr_prelim}. The \textit{adjacency} matrix, $\mathcal{A} = [a_{ij}] \in \mathbb{R}^{n \times n}$, is defined such that $a_{ij}$ is a positive weight if $(j,i) \in \mathcal{E}$ and $a_{ij} = 0$ if $(j,i) \neq \mathcal{E}$. Since no self edges are present, $a_{ii} = 0$. For an undirected graph, $\mathcal{A}$ is symmetric. The \textit{degree} matrix of the graph $\mathcal{G}$ is, $\mathcal{D} = diag(\sum \limits_{j=1}^{n} a_{1j},...,\sum \limits_{j=1}^{n} a_{nj}) \in \mathbb{R}^{n \times n}$. \textit{Laplacian} matrix, $\mathcal{L} \triangleq [l_{ij}] \in \mathbb{R}^{n \times n}$, is then defined as 
\begin{align}
    \mathcal{L} &= \mathcal{D} - \mathcal{A} \nonumber\\
    l_{ii} &= \sum\limits_{j=1,j \neq i}^n a_{ij}, \quad \quad l_{ij} = -a_{ij}, i\neq j
\end{align}
$\mathcal{L}$ is symmetric for undirected graphs and since $\mathcal{L}$ has zero row sums, 0 is an eigenvalue of $\mathcal{L}$ with an associated eigenvector $\vec{1}_n$. Laplacian matrix is diagonally dominant and has non negative diagonal entries \cite{wr_prelim}. Note that, $\mathcal{L}\vec{x}$ is a column stack vector of $\sum \limits_{j=1}^n a_{ij}(x_i - x_j)$, where $\vec{x} = [x_1,...,x_n]^T \in \mathbb{R}^n$. 

For a leader follower network, we let the leader be denoted by 0 and followers by nodes $1,...,n$. The Laplacian matrix of the followers is denoted by $\mathcal{L}$. The communication between the leader and a follower is unidirectional with the leader issuing the communication. The edge weight between the leader follower is denoted by $a_{i0}, i \in \mathcal{V}$. If the $i^{th}$ follower is connected to the leader then $a_{i0} > 0$ and 0 otherwise. We define $\bar{A} = diag(a_{10},...a_{n0})$.

\subsection{Lemmas} 

 \begin{Assumption}
 \label{ass1}
 All followers are connected to the leader and the communication network is undirected.
 \end{Assumption}
 \begin{Assumption}
 \label{ass2}
 Neighbors can exchange both, their measurement of relative position of the leader and their estimate of the bias.
 \end{Assumption}
 \begin{lemma}
 \cite{wr_prelim}\label{pdf_lf}
 If Assumption \ref{ass1} holds then $\mathcal{L} + \bar{A}$ is positive definite. 
 \end{lemma}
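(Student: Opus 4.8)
The claim is that if every follower is connected to the leader (Assumption~\ref{ass1}) and the follower network is undirected, then $\mathcal{L}+\bar A$ is positive definite. Here $\mathcal{L}$ is the (symmetric, positive semidefinite) Laplacian of the follower subgraph and $\bar A=\mathrm{diag}(a_{10},\dots,a_{n0})$ with $a_{i0}\ge 0$. Since $\mathcal{L}+\bar A$ is symmetric, positive definiteness is equivalent to $\vec x^{\top}(\mathcal{L}+\bar A)\vec x>0$ for every $\vec x\neq \vec 0$. The plan is to write this quadratic form explicitly and argue that it can only vanish at $\vec x=\vec 0$.

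**Key steps.** First I would record the standard identity for an undirected graph: $\vec x^{\top}\mathcal{L}\vec x=\tfrac12\sum_{i,j}a_{ij}(x_i-x_j)^2\ge 0$, together with $\vec x^{\top}\bar A\vec x=\sum_{i=1}^{n}a_{i0}x_i^2\ge 0$. Hence $\vec x^{\top}(\mathcal{L}+\bar A)\vec x=\tfrac12\sum_{i,j}a_{ij}(x_i-x_j)^2+\sum_{i=1}^{n}a_{i0}x_i^2\ge 0$, so the matrix is at least positive semidefinite. Second, suppose $\vec x^{\top}(\mathcal{L}+\bar A)\vec x=0$; then both nonnegative sums vanish separately. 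The first forces $x_i=x_j$ whenever $a_{ij}>0$, i.e. $\vec x$ is constant on every connected component of the follower graph; the second forces $x_i=0$ for every follower $i$ with $a_{i0}>0$. Third, I would invoke the connectivity hypothesis: every follower lies in a component that contains at least one node directly linked to the leader (this is exactly what "all followers are connected to the leader" gives, possibly via a path through other followers). Propagating the constancy along edges from that anchored node shows $x_i=0$ for all $i$, so $\vec x=\vec 0$. Therefore the quadratic form is strictly positive for $\vec x\neq\vec 0$, establishing positive definiteness.

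**Main obstacle.** The only delicate point is the precise reading of Assumption~\ref{ass1}: "all followers are connected to the leader" should be interpreted as the augmented graph (followers plus leader) being connected, equivalently each follower component of $\mathcal{L}$ meeting the set $\{i:a_{i0}>0\}$. With that reading the connectivity/propagation argument is routine; the potential pitfall is a weaker reading under which an isolated follower with $a_{i0}=0$ could survive, which would break the result, so I would state the interpretation explicitly. Alternatively, one can give a cleaner non-constructive version by noting $\mathcal{L}+\bar A$ is a symmetric diagonally dominant matrix with nonnegative diagonal that is irreducibly diagonally dominant (strict dominance in at least one row of each irreducible block, coming from the $a_{i0}$ term), which is classically nonsingular and PSD, hence positive definite; I would probably present the quadratic-form argument as the main line and mention this as a remark.
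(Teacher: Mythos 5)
The paper does not prove this lemma; it is quoted directly from the cited reference (Ren and Cao), where the argument is exactly the quadratic-form decomposition you give: $\vec x^{\top}(\mathcal{L}+\bar A)\vec x=\tfrac12\sum_{i,j}a_{ij}(x_i-x_j)^2+\sum_i a_{i0}x_i^2$, followed by the observation that vanishing forces $\vec x$ to be constant on each follower component and zero at every leader-anchored node, hence zero everywhere under the connectivity hypothesis. Your proof is correct, matches the standard argument, and your explicit flagging of the needed reading of Assumption~\ref{ass1} (every follower component must contain a node with $a_{i0}>0$, which is indeed what ``connected to the leader'' means here) is the right caveat to record.
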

 
 \begin{lemma}\cite{wr_prelim}
 If the symmetric matrix $H > 0$ $\forall$ \vec{x} $\in \mathbb{R}^n$, then 
 \begin{align}
     \lambda_{min}(H) \norm{x}^2 &\leq \vec{x}^T H \vec{x} \leq \lambda_{max}(H) \norm{x}^2
 \end{align}
 \end{lemma}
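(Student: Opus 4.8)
The plan is to reduce the quadratic form $\vec{x}^T H\vec{x}$ to a weighted sum of squares by diagonalizing $H$. Since $H$ is real and symmetric, the spectral theorem supplies an orthogonal matrix $Q\in\mathbb{R}^{n\times n}$, i.e. $Q^TQ=QQ^T=\vec{I}_{n\times n}$, and a diagonal matrix $\Lambda=diag(\lambda_1,\dots,\lambda_n)$ whose entries are the (real) eigenvalues of $H$, such that $H=Q\Lambda Q^T$. Substituting this and introducing the change of variables $\vec{y}=Q^T\vec{x}=[y_1,\dots,y_n]^T$, I obtain $\vec{x}^T H\vec{x}=\vec{x}^T Q\Lambda Q^T\vec{x}=\vec{y}^T\Lambda\vec{y}=\sum_{i=1}^n\lambda_i y_i^2$.

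Next I would bound the coefficients in this sum. By definition of the extreme eigenvalues, $\lambda_{min}(H)\le\lambda_i\le\lambda_{max}(H)$ for every $i$, and since each $y_i^2\ge 0$ the bounds propagate through the sum: $\lambda_{min}(H)\sum_{i=1}^n y_i^2\le\sum_{i=1}^n\lambda_i y_i^2\le\lambda_{max}(H)\sum_{i=1}^n y_i^2$, that is, $\lambda_{min}(H)\norm{y}^2\le\vec{x}^T H\vec{x}\le\lambda_{max}(H)\norm{y}^2$. The last step is to remove the auxiliary variable: because $Q$ is orthogonal it is norm-preserving, $\norm{y}^2=\vec{y}^T\vec{y}=\vec{x}^T QQ^T\vec{x}=\vec{x}^T\vec{x}=\norm{x}^2$, and substituting this equality into the previous chain gives exactly $\lambda_{min}(H)\norm{x}^2\le\vec{x}^T H\vec{x}\le\lambda_{max}(H)\norm{x}^2$. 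The hypothesis $H>0$ is used only to guarantee $\lambda_{min}(H)>0$ so that the lower bound is non-trivial; the chain of inequalities itself is valid for any real symmetric $H$.

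Since every step is elementary once the diagonalization is in hand, there is no genuine obstacle; the only point that deserves care is the appeal to the spectral theorem. If one prefers to avoid it, the same conclusion follows directly from the Rayleigh--Ritz variational characterization $\lambda_{min}(H)=\min_{\vec{x}\neq 0}\tfrac{\vec{x}^T H\vec{x}}{\norm{x}^2}$ and $\lambda_{max}(H)=\max_{\vec{x}\neq 0}\tfrac{\vec{x}^T H\vec{x}}{\norm{x}^2}$ (itself a standard consequence of the continuity of the Rayleigh quotient on the compact unit sphere), after which multiplying through by $\norm{x}^2\ge 0$ yields the stated two-sided bound for all $\vec{x}\in\mathbb{R}^n$, including $\vec{x}=\vec{0}_n$, where it holds trivially.
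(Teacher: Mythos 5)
Your proof is correct and is the standard spectral-theorem argument for the Rayleigh-quotient bounds; the paper itself states this lemma with only a citation and gives no proof, so there is nothing to diverge from. Your closing observation that positive definiteness is not actually needed for the inequality chain (only for the lower bound to be useful) is also accurate.
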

 
\begin{lemma}\cite{wr_prelim}
If graph $\mathcal{G}$ is undirected and  connected, then $\mathcal{L}$ has following properties: 
\begin{enumerate}
    \item For any $\vec{x}\in \mathbb{R}^n$, $\vec{x}^T\mathcal{L}\vec{x} = \frac{1}{2}\sum \limits_{i=1}^n \sum \limits_{j=1}^n a_{ij}(x_i-x_j)^2$ which implies that $\mathcal{L}$ is positive semidefinite
    \item  $\mathcal{L}\vec{x} = 0$ or $\vec{x}^T\mathcal{L}\vec{x} = 0$ if and only if $x_i = x_j$ for all $i,j = 1,...,n$
    \item Let $\lambda_i(\mathcal{L})$ be the $i^{th}$ eigenvalue of $\mathcal{L}$ with $\lambda_1(\mathcal{L})\leq \lambda_2(\mathcal{L}) \leq \cdot \cdot \cdot \leq \lambda_n(\mathcal{L})$, so that $\lambda_1(\mathcal{L}) = 0$. Then, $\lambda_2(\mathcal{L})$ is the \textit{algeberic connectivity}, which is positive if and only if the undirected graph is connected. The algebraic connectivity quantifies the convergence rate of consensus algorithms
\end{enumerate}
\end{lemma}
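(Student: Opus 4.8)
The three claims are all consequences of the algebraic structure $\mathcal{L}=\mathcal{D}-\mathcal{A}$ together with $a_{ij}=a_{ji}\ge 0$. For the first claim, the plan is to expand the quadratic form directly: writing $\vec{x}^T\mathcal{L}\vec{x}=\sum_i l_{ii}x_i^2+\sum_{i\neq j} l_{ij}x_ix_j$ and substituting $l_{ii}=\sum_{j\neq i}a_{ij}$ and $l_{ij}=-a_{ij}$, I would then recognise that the symmetric double sum $\tfrac12\sum_i\sum_j a_{ij}(x_i-x_j)^2$ expands to exactly the same expression --- the $x_i^2$ and $x_j^2$ pieces each contribute $\tfrac12\sum_i\bigl(\sum_j a_{ij}\bigr)x_i^2$ after relabelling the dummy index, and the cross term contributes $-\sum_i\sum_j a_{ij}x_ix_j$. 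Since every $a_{ij}\ge 0$ and every $(x_i-x_j)^2\ge 0$, the identity immediately yields $\vec{x}^T\mathcal{L}\vec{x}\ge 0$, hence $\mathcal{L}$ is positive semidefinite.

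For the second claim I would first record the general fact that for a symmetric positive semidefinite matrix, $\vec{x}^T\mathcal{L}\vec{x}=0$ is equivalent to $\mathcal{L}\vec{x}=0$: factoring $\mathcal{L}=M^TM$, one has $\vec{x}^T\mathcal{L}\vec{x}=\norm{M\vec{x}}^2=0\Leftrightarrow M\vec{x}=0\Rightarrow\mathcal{L}\vec{x}=M^TM\vec{x}=0$, and the reverse implication is trivial. It then suffices to characterise when the quadratic form vanishes. From the identity in claim~1, $\vec{x}^T\mathcal{L}\vec{x}=0$ forces $a_{ij}(x_i-x_j)^2=0$ for every pair, i.e.\ $x_i=x_j$ whenever $a_{ij}>0$ --- equality across every edge of $\mathcal{G}$. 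Invoking connectedness, any two nodes $i,j$ are joined by a path, so chaining these edge-equalities gives $x_i=x_j$ for all $i,j$; conversely, if $\vec{x}=c\vec{1}_n$ then $\mathcal{L}\vec{x}=0$ because $\mathcal{L}$ has zero row sums. This is the step I expect to require the most care, since it is the only place the connectedness hypothesis actually enters and the path-chaining argument needs undirectedness so that the equalities propagate in both directions along each edge.

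For the third claim I would use that a real symmetric matrix has a full set of real eigenvalues, all nonnegative here by claim~1, so they may be ordered $\lambda_1\le\cdots\le\lambda_n$; since $\mathcal{L}\vec{1}_n=0$ we have $\lambda_1=0$. The multiplicity of the zero eigenvalue equals $\dim\ker\mathcal{L}$, which by claim~2 is exactly one (spanned by $\vec{1}_n$) precisely when $\mathcal{G}$ is connected; hence $\lambda_2>0$ if and only if $\mathcal{G}$ is connected. The remark that $\lambda_2$ governs the consensus rate I would support briefly by diagonalising the linear protocol $\dot{\vec{x}}=-\mathcal{L}\vec{x}$ in the orthonormal eigenbasis of $\mathcal{L}$: the component along $\vec{1}_n$ is invariant, and every transverse mode decays like $e^{-\lambda_k t}$, so the worst-case convergence rate to the consensus subspace is $e^{-\lambda_2 t}$. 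Since the result is classical and is cited to \cite{wr_prelim}, I would keep each of these steps terse rather than belabouring the linear-algebra details.
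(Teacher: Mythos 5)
The paper does not prove this lemma; it is quoted verbatim from the cited textbook \cite{wr_prelim} and used as a known fact, so there is no in-paper argument to compare against. Your proof is the standard one and is correct: the double-sum identity (using $a_{ij}=a_{ji}$ and $a_{ii}=0$), the equivalence $\vec{x}^T\mathcal{L}\vec{x}=0\Leftrightarrow\mathcal{L}\vec{x}=0$ via a factorization of a symmetric positive semidefinite matrix, the path-chaining argument where connectedness enters, and the eigenvalue ordering for item~3. The only loose thread is the ``only if'' direction of item~3: claim~2 as you prove it only yields that connectedness implies $\dim\ker\mathcal{L}=1$, so to conclude $\lambda_2>0$ \emph{only if} $\mathcal{G}$ is connected you should add that a disconnected graph has the indicator vectors of its components in $\ker\mathcal{L}$, forcing $\lambda_2=0$; this is a one-line addition and does not affect the use of the lemma in the paper, which assumes connectedness throughout.
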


\begin{lemma}[Barbalat's Lemma] \cite{rac_book}
\label{bl}
If, for a vector-valued function, $f(\cdot):\mathbb{R} \to \mathbb{R}^n$ the following conditions hold true,
  \begin{enumerate}
    \item $lim_{t\to\infty}\int_0^tf(\tau)d\tau$ exists and is finite
    \item $f(t)$ is uniformly continuous
  \end{enumerate}
then, $lim_{t \to \infty} f(t) = 0$. 
\begin{corollary}
If, for a vector-valued function, $f(\cdot):[0, \infty)\to \mathbb{R}^n$ the following two conditions hold true,
  \begin{enumerate}
    \item $f(x) \in \mathbb{L}^\infty \cap \mathbb{L}^p$ for any $p \in [1, \infty)$ and,
    \item $f'(x) \in \mathbb{L}^\infty$
  \end{enumerate}
then, $lim_{x \to \infty} f(x) = 0$
\end{corollary}
\end{lemma}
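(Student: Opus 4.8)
The plan is to deduce this Corollary directly from Barbalat's Lemma (Lemma~\ref{bl}) by constructing an appropriate nonnegative scalar auxiliary function. First I would record a preliminary reduction: because $f \in \mathbb{L}^\infty$, membership in $\mathbb{L}^p$ automatically upgrades to membership in every higher $\mathbb{L}^q$, since for $q \ge p$
\begin{equation*}
\int_0^\infty \norm{f(\tau)}^q \, d\tau \;\le\; \norm{f}_\infty^{\,q-p}\int_0^\infty \norm{f(\tau)}^p\, d\tau \;<\; \infty .
\end{equation*}
Hence, replacing $p$ by $\max(p,2)$ if necessary, we may assume without loss of generality that $p \ge 2$.

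Next I would introduce the scalar function $g(t) \triangleq \big(f(t)^T f(t)\big)^{p/2} = \norm{f(t)}^p \ge 0$ on $[0,\infty)$. Condition~1 of Barbalat's Lemma is then immediate, because $\int_0^t g(\tau)\,d\tau$ increases to $\int_0^\infty \norm{f(\tau)}^p\,d\tau$, which exists and is finite as $f \in \mathbb{L}^p$.

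To verify Condition~2 (uniform continuity of $g$) I would exhibit a bounded derivative. Differentiating,
\begin{equation*}
g'(t) \;=\; p\,\big(f(t)^T f(t)\big)^{p/2-1}\, f(t)^T f'(t) .
\end{equation*}
Since $p \ge 2$ we have $p/2 - 1 \ge 0$, so $\big(f^T f\big)^{p/2-1} \le \norm{f}_\infty^{\,p-2}$; together with $|f^T f'| \le \norm{f}_\infty\,\norm{f'}_\infty$ this shows $g' \in \mathbb{L}^\infty$. A function with bounded derivative is Lipschitz and therefore uniformly continuous. Barbalat's Lemma then gives $\lim_{t\to\infty} g(t) = 0$, i.e. $\lim_{t\to\infty}\norm{f(t)}^p = 0$, which (as $p$ is a fixed finite positive exponent) is equivalent to $\lim_{t\to\infty} f(t) = 0$.

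The only delicate point, and the reason the preliminary reduction is needed, is the behaviour of $g$ near zeros of $f$: for a general $p < 2$ the factor $(f^T f)^{p/2-1}$ could become unbounded wherever $f$ vanishes, and the derivative estimate would fail; passing to the exponent $\max(p,2)$ is precisely what eliminates this obstacle. (If the hypothesis is instead read as "for every $p$", one may skip the reduction and argue directly with the smooth choice $g = f^T f$.) \qed
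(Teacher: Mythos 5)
The paper offers no proof of this statement at all --- both the lemma and its corollary are quoted verbatim from \cite{rac_book} --- so there is no in-paper argument to compare against; your proposal must be judged on its own. Taking Barbalat's Lemma as the given base result and deriving the corollary from it is the natural reading of the task, and your derivation is correct. The preliminary upgrade from $\mathbb{L}^p$ to $\mathbb{L}^{\max(p,2)}$ via $\norm{f(\tau)}^{q}\leq\norm{f}_\infty^{q-p}\norm{f(\tau)}^{p}$ is valid, and it is exactly the right precaution: it is what keeps the factor $(f^Tf)^{p/2-1}$ in $g'$ bounded (indeed, for $p>2$ the map $u\mapsto u^{p/2}$ is $C^1$ at $u=0$ with vanishing derivative, so the chain-rule formula holds even at zeros of $f$). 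The remaining steps --- monotone bounded convergence of $\int_0^t g$ for condition 1, boundedness of $g'$ hence Lipschitz continuity hence uniform continuity for condition 2, and $\norm{f}^p\to0\Rightarrow f\to0$ --- are all sound. This is the standard textbook route (the choice $g=f^Tf$ for the $\mathbb{L}^2$ case being the most common special instance). For completeness, an alternative that bypasses Barbalat entirely argues by contradiction: if $\norm{f(t_k)}\geq\varepsilon$ along $t_k\to\infty$, then the Lipschitz bound from $f'\in\mathbb{L}^\infty$ forces $\norm{f}\geq\varepsilon/2$ on intervals of fixed positive length around each $t_k$, contradicting $f\in\mathbb{L}^p$; that version is slightly more elementary but proves nothing your route does not.
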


\begin{lemma}\cite{weiren}
Let \vec{x} and \vec{y} be any two vectors in $\mathbb{R}^n$, $\vec{A} \in \mathbb{R}^{n\times n}$ be a matrix. Then, 
\begin{align}
\vec{x}^T sgn(\vec{x}) &= \norm{\vec{x}}_1 \\
\norm{\vec{x}}_1 &\geq \norm{\vec{x}} \\
|\vec{x}^T \vec{A} \vec{y}| &\leq \norm{\vec{x}}\norm{\vec{A}}\norm{\vec{y}}
\end{align}
\end{lemma}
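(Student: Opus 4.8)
The plan is to verify each of the three relations directly from the definitions of the signum function and of the one-norm, Euclidean vector norm, and induced matrix norm; all three are elementary, and the first two reduce to componentwise manipulations.

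For the first relation I would expand the inner product as $\vec{x}^T sgn(\vec{x}) = \sum_{i=1}^n x_i\, sgn(x_i)$, observe that $x_i\, sgn(x_i) = |x_i|$ for every real number (the degenerate case $x_i = 0$ being consistent with the convention $sgn(0) = 0$), and conclude that the sum equals $\sum_{i=1}^n |x_i| = \norm{\vec{x}}_1$, which is precisely the asserted identity.

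For the second relation, since both sides are nonnegative it suffices to compare squares: I would write $\norm{\vec{x}}_1^2 = \left(\sum_{i=1}^n |x_i|\right)^2 = \sum_{i=1}^n |x_i|^2 + \sum_{i \neq j} |x_i|\,|x_j| \geq \sum_{i=1}^n |x_i|^2 = \norm{\vec{x}}^2$, discarding the nonnegative cross terms, and then take square roots to obtain $\norm{\vec{x}}_1 \geq \norm{\vec{x}}$. For the third relation I would apply the Cauchy--Schwarz inequality to the vectors $\vec{x}$ and $\vec{A}\vec{y}$ to get $|\vec{x}^T \vec{A}\vec{y}| \leq \norm{\vec{x}}\,\norm{\vec{A}\vec{y}}$, and then compose this with the submultiplicative bound $\norm{\vec{A}\vec{y}} \leq \norm{\vec{A}}\,\norm{\vec{y}}$, which holds because $\norm{\cdot}$ on matrices is taken to be the norm induced by the Euclidean vector norm.

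I do not expect any genuine obstacle here; the only point requiring care is notational consistency, namely that $\norm{\cdot}$ applied to a matrix must be read as the spectral (Euclidean-induced) norm for the third inequality to hold with the stated constant, and that the degenerate case $x_i = 0$ in the first identity is accounted for by the convention $sgn(0) = 0$.
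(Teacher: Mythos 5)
Your proof is correct, and the paper itself offers no proof of this lemma --- it is simply stated with a citation to Ren and Cao --- so your elementary componentwise verification of the first two identities and the Cauchy--Schwarz-plus-induced-norm argument for the third is exactly the standard derivation one would supply. Your caveat that $\norm{\vec{A}}$ must be read as the Euclidean-induced (spectral) norm for the third inequality to hold as stated is a worthwhile observation, since the paper never specifies which matrix norm it intends.
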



\subsection{Spacecraft Relative Orbital Dynamics}

For a leader follower spacecraft formation, relative translational orbital dynamics equations are described in \cite{model_el}. The leader orbit frame has its origin located in the centre of mass of the leader spacecraft. 
\begin{figure}
\label{frame}
\centering
\includegraphics[scale = .8]{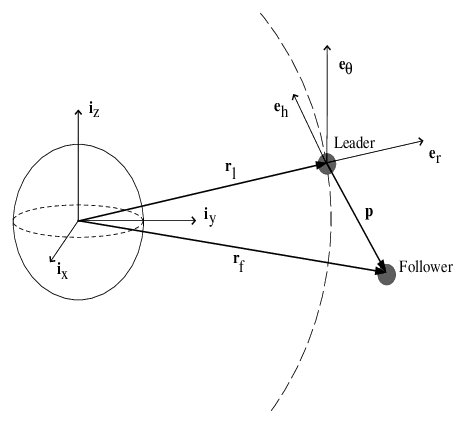}
\caption{Earth centered initial frame ($\vec{i}_{\vec{X}}, \vec{i}_{\vec{y}},\vec{i}_{\vec{Z}}$) and Leader orbit reference frame ($\vec{e}_{\vec{r}}, \vec{e}_{\theta},\vec{e}_{\vec{h}}$)\cite{model_el} }
\label{pb}       
\end{figure}
The $\vec{e}_r$ axis is parallel to $\vec{r}_l$ (vector joining the center of the earth and the leader) and  $\vec{e}_h$ axis is parallel to the orbit momentum vector which points in the orbit normal direction. The  $\vec{e}_{\theta}$ axis completes the right handed orthogonal frame. Non-linear relative motion dynamics for spacecraft in formation is given by \eqref{SFF_eq} : 

\begin{subequations}
\label{SFF_eq}
\begin{align}
    \ddot{x} -2\dot{\theta} \dot{y} + \bigg(\frac{\mu}{r_f^3}-\dot{\theta}^2\bigg)x-\ddot{\theta}y + \mu\bigg(\frac{r_l}{r_f^3}-\frac{1}{r_l^2}\bigg) &= \frac{\tau_x}{m_f} \\
    \ddot{y} + 2\dot{\theta}\dot{x}+\ddot{\theta}x+\bigg(\frac{\mu}{r_f^3}-\dot{\theta}^2\bigg)y &= \frac{\tau_y}{m_f}  \\
    \ddot{z}+\frac{\mu}{r_f^3}z &= \frac{\tau_z}{m_f}
\end{align}
\end{subequations}
where $\vec{r_f}$ is the orbit radius of the follower and $\dot{\theta}$ is the true anomaly rate of the of the leader. $\vec{\tau}$ is the  actuator force of the follower. $\vec{p} = \begin{bmatrix}x&y&z \end{bmatrix}^T$ is the relative position between the leader and follower in leader orbit reference frame. $m_f$ and $m_l$ are the masses of the follower and leader respectively  and $\mu = G(m_l + m_f)$.
\eqref{SFF_eq} can be written in the Euler Lagrangian form for the $i^{th}$ follower as,
\begin{align}
    \label{EL_fol}
    \vec{M}_i\ddot{\vec{q}}_i + \vec{C}_i(\dot{\theta})\dot{\vec{q}}_i + \vec{g}_i(\dot{\theta},\ddot{\theta},\vec{q}_i) &= \boldsymbol{\tau}_i
\end{align}
where
\begin{align}
\label{mcg}
    \vec{M}_i &= \begin{bmatrix}
    m_i & 0 & 0\\ 0 & m_i & 0 \\ 0 & 0 & m_i
    \end{bmatrix} \\
    \vec{C}_i &= \begin{bmatrix}
    0 & -2m_i\dot{\theta} & 0\\
    -2m_i\dot{\theta} & 0 & 0\\
    0 & 0 & 0
    \end{bmatrix} \\
    \vec{g}_i &= m_i\begin{bmatrix}
    \bigg(\frac{\mu}{r_f^3}-\dot{\theta}^2\bigg)x_i -\ddot{\theta}y_i + \mu\bigg(\frac{r_l}{r_f^3}-\frac{1}{r_l^2}\bigg)\\
    \ddot{\theta}x_i + \bigg(\frac{\mu}{r_f^3}-\dot{\theta}^2\bigg)y_i \\
    \frac{\mu z_i}{r_f^3}
    \end{bmatrix}
\end{align}
Here, $\vec{q}_i = \begin{bmatrix} x_i&y_i&z_i \end{bmatrix}^T$ and $\dot{\vec{q}}$ is the relative position and relative translational velocity of the $i^{th}$ agent with respect to the leader in leader orbit reference frame. 
Define $\vec{q} \triangleq [q_1,...,q_n]^T$, $\vec{\dot{q}} \triangleq [\dot{\vec{q}}_1,...,\dot{\vec{q}}_n]^T$, $\boldsymbol{\tau} \triangleq [\boldsymbol{\tau}_1,...,\boldsymbol{\tau}_n]^T$, $\vec{M} \triangleq diag(\vec{M}_1,...,\vec{M}_n)$, $\vec{C} \triangleq diag(\vec{C}_1,..,\vec{C}_n)$ and $\vec{g} \triangleq [\vec{g}_1,...,\vec{g}_n]^T$

\section{Problem Formulation}
We are interested in formation flight of spacecraft described by the following Euler-Lagrange equation,
 \begin{align}
     \vec{M}_i(\vec{q}_i)\ddot{\vec{q}}_i + \vec{C}_i(\vec{q}_i,\dot{\vec{q}}_i)\dot{\vec{q}}_i + \vec{g}_i(\vec{q}_i) = \vec{\tau} _i, \quad \quad i=1,...,n
     \label{EL_gen}
 \end{align}
 where $\vec{q}_i \in \mathbb{R}^p$ is the relative position vector of the $i^{th}$ agent with respect to the leader, $\vec{M}_i(\vec{q}_i) \in \mathbb{R}^{p \times p}$ is the symmetric positive definite inertia matrix, $\vec{C}_i(\vec{q}_i,\dot{\vec{q}}_i)\dot{\vec{q}}_i$ is the vector of Coriolis and centrifugal torques, $\vec{g}_i(\vec{q}_i)$ is the vector of gravitational torques and $\vec{\tau}_i \in \mathbb{R}^p$ is the force produced by the actuator of the $i^{th}$ agent. Here, the leader specifies the objective for the follower network. The agents can measure relative positions using line of sight vector technique and a constant \emph{unknown} bias, $\vec{b}_i \in \mathbb{R}^3$ for $i^{th}$ agent, is present in these measurements. Now, we make the following assumptions:
 \begin{Assumption}
 \label{ass3}
There exist positive constants $k_{m_i}, k_{c_i} $ and $k_g$ such that $M_i(q_i) - k_{m_i} \mathbf{I}_p \leq 0$, $\norm{(g_i(q_i))} \leq k_g$ and $\norm{(C_i(q_i,\dot{q}_i))} \leq k_{c_i}$
 \end{Assumption}
  \begin{Assumption}
  \label{a}
$\dot{M}_i(q_i) - 2C_i(q_i,\dot{q}_i)$ is skew symmetric
 \end{Assumption}
The objective is for the followers to approach the generalized coordinates of the leader with local interaction. We propose a non linear, distributed and model independent adaptive control law which ensures asymptotic convergence to a neighborhood of the consensus. A Lyapunov based analysis is used to derive bias estimator dynamics. 

\section{Control Law Design}

Define
\begin{align}
\label{s}
    s_i &= \dot{\vec{q}}_i + \lambda(\vec{q}_i + \vec{b_i} - \hat{\vec{b}})_i, \quad \lambda \geq 0
\end{align}
where $\vec{b}_i$ is the bias and $\hat{\vec{b}}_i$ is the estimate of the bias for the $i^{th}$ agent. \eqref{EL_fol} can then be written as:
\begin{align} 
\label{el_s}
    \vec{M}_i\dot{s}_i &= \boldsymbol{\tau}_i - \vec{C}_i\dot{\vec{q}}_i - \vec{g}_i + \lambda \vec{M}_i(\dot{\vec{q}}_i - \dot{\hat{\vec{b}}}_i)
\end{align}
We propose the following control law :
\begin{align}
\label{tau1}
    \boldsymbol{\tau}_i &= -\alpha \sum \limits_{j=0}^n a_{ij}(s_i-s_j)-\beta_i sgn(s_i) -\gamma_i\norm{\dot{\vec{q}}_i}sgn(s_i), \quad \alpha, \beta_i, \gamma_i \geq 0 \\
    \label{tau2}
    \boldsymbol{\tau} &= -\alpha[(\mathcal{L} + \bar{A})\otimes \vec{I}_3]s  -\beta sgn(s) - \Gamma Q sgn(s) 
\end{align}
where $\Gamma \triangleq blkdiag(\gamma_1\vec{I}_3,...,\gamma_n\vec{I}_3)$ and $Q \triangleq blkdiag(\norm{\dot{\vec{q}}}_1\vec{I}_3,..., \norm{\dot{\vec{q}}}_n\vec{I}_3)$ and $s \triangleq [s_1,...,s_n]^T$. Define the following placeholders for brevity:
\begin{align}
    H &= \mathcal{L} + \bar{A}\\
    H_1 &= H \otimes \vec{I}_3 \\
    \Tilde{\vec{b}} &= \vec{b} - \Hat{\vec{b}} \\
    \Tilde{\vec{q}} &= \vec{q} + \Tilde{\vec{b}}
\end{align}
The adaptive control law for estimating bias is taken to be:
\begin{align}
\label{adaption}
    \dot{\hat{\vec{b}}} &= -\dot{\vec{q}}
\end{align}

\begin{theorem}
Consider the multi-agent leader follower spacecraft network with agent dynamics given by \eqref{EL_gen} and an undirected connected communication graph $\mathcal{G}$. If Assumptions 1 - 4 hold, then the control law described by \eqref{s}--\eqref{tau2} and bias adaptation law \eqref{adaption}, guarantees that $\lim \limits_{t \to \infty}\dot{\vec{q}}(t) \rightarrow 0$, $\lim \limits_{t \to \infty}[\vec{q}(t) + \tilde{\vec{b}}(t)]\rightarrow 0$ exponentially.
\end{theorem}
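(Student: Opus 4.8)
The plan is to obtain exponential decay of the composite sliding variable $s$ by a single Lyapunov estimate, and then to read off convergence of $\dot{\vec q}$ and of $\tilde{\vec q}=\vec q+\tilde{\vec b}$ from the definition \eqref{s} of $s$.

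First I would write down the closed loop. Since $\vec b$ is constant, the adaptation law \eqref{adaption} gives $\dot{\tilde{\vec b}}=\dot{\vec q}$, hence $\dot{\vec q}_i-\dot{\hat{\vec b}}_i=\dot{\tilde{\vec q}}_i=2\dot{\vec q}_i$; stacking \eqref{el_s} and substituting \eqref{tau2} then yields
\begin{equation*}
\vec M\dot{s}=-\alpha H_1 s-\beta\,sgn(s)-\Gamma Q\,sgn(s)-\vec C\dot{\vec q}-\vec g+2\lambda\vec M\dot{\vec q},
\end{equation*}
together with the $\tilde{\vec q}$-subsystem $\dot{\tilde{\vec q}}=2\dot{\vec q}=2s-2\lambda\tilde{\vec q}$.

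Next I would take $V=\tfrac12 s^{T}\vec M s$. Differentiating along the closed loop and using Assumption~\ref{a} to replace $\tfrac12 s^{T}\dot{\vec M}s$ by $s^{T}\vec C s$, the two Coriolis contributions collapse to $s^{T}\vec C(s-\dot{\vec q})$. I would then bound every sign-indefinite term with the lemmas of Section~\ref{prelim} and Assumption~\ref{ass3}: $s^{T}\vec g$ by $k_g\norm{s}_1$; $2\lambda s^{T}\vec M\dot{\vec q}$ by $2\lambda\sum_i k_{m_i}\norm{\dot{\vec q}_i}\norm{s_i}_1$; the Coriolis remainder by $\sum_i k_{c_i}\norm{s_i}^2+\sum_i k_{c_i}\norm{\dot{\vec q}_i}\norm{s_i}_1$; while $s^{T}\beta\,sgn(s)=\sum_i\beta_i\norm{s_i}_1$, $s^{T}\Gamma Q\,sgn(s)=\sum_i\gamma_i\norm{\dot{\vec q}_i}\norm{s_i}_1$, and $s^{T}H_1 s\geq\lambda_{min}(H)\norm{s}^2$ with $\lambda_{min}(H)>0$ by Lemma~\ref{pdf_lf}. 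Provided the gains are taken large enough, namely $\beta_i\geq k_g$, $\gamma_i\geq k_{c_i}+2\lambda k_{m_i}$ and (when $\dot{\vec M}\neq 0$) $\alpha\lambda_{min}(H)>\max_i k_{c_i}$, all the $\norm{s_i}_1$ and $\norm{\dot{\vec q}_i}\norm{s_i}_1$ terms become nonpositive and there remains $\dot V\leq-c\norm{s}^2\leq-\kappa V$ for some $\kappa>0$; hence $V$, and therefore $\norm{s}$, decays exponentially.

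Finally, from $\norm{s(t)}\leq c_1 e^{-\kappa t/2}$ and the exponentially stable linear system $\dot{\tilde{\vec q}}=-2\lambda\tilde{\vec q}+2s$ (this step needs $\lambda>0$), variation of constants gives $\tilde{\vec q}(t)\to 0$ exponentially, whence $\dot{\vec q}=s-\lambda\tilde{\vec q}\to 0$ exponentially; since $\tilde{\vec q}=\vec q+\tilde{\vec b}$, this is exactly the assertion of the theorem. Boundedness of all signals on $[0,\infty)$ is automatic because $V$ is nonincreasing (so $s$ stays bounded) and the $\tilde{\vec q}$-subsystem is a stable linear filter driven by the bounded $s$. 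The main obstacle is the term $2\lambda\vec M\dot{\vec q}$ produced by the adaptation: it is sign-indefinite and, through $\dot{\vec q}=s-\lambda\tilde{\vec q}$, couples $s$ to $\tilde{\vec q}$, which is not a priori bounded, so it cannot be absorbed into a constant. The resolution is that $\norm{\dot{\vec q}_i}$ enters this term only linearly, and the robustness term $\gamma_i\norm{\dot{\vec q}_i}sgn(s_i)$ in \eqref{tau1} was designed precisely to dominate any contribution proportional to $\norm{\dot{\vec q}_i}$, so no prior bound on $\dot{\vec q}$ or $\tilde{\vec q}$ is required. A minor technicality is that the closed loop has a discontinuous right-hand side, so solutions and the Lyapunov inequality are to be understood in the Filippov/differential-inclusion sense.
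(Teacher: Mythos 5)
Your Lyapunov argument is essentially the paper's own: the same $V=\tfrac12 s^{T}\vec M s$, the same substitution $\dot{\vec q}-\dot{\hat{\vec b}}=2\dot{\vec q}$ from \eqref{adaption}, the same term-by-term domination using Assumption~\ref{ass3} and Lemmas 2 and 5, the same gain conditions $\beta>k_g$, $\gamma_i>k_{c_i}+2\lambda k_{m_i}$, and the same conclusion $\dot V\leq-\alpha\lambda_{min}(H)\norm{s}^2\leq-\eta V$. Two points where you diverge are worth noting. First, you carry the $\tfrac12 s^{T}\dot{\vec M}s$ term via Assumption~\ref{a}, which produces the residual $s^{T}\vec C s$ and your extra condition $\alpha\lambda_{min}(H)>\max_i k_{c_i}$; the paper silently drops this term, which is harmless here only because $\vec M_i=m_i\vec I_3$ is constant for the spacecraft model, so your treatment is the one that survives for general Euler--Lagrange systems. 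Second, and more substantively, your final step is different and more rigorous: the paper passes to the limit by setting $s\equiv 0$ in \eqref{s_0} and solving the resulting autonomous system by Laplace transform, which establishes the behaviour of the limiting dynamics rather than of the actual trajectories; you instead keep the exact cascade $\dot{\tilde{\vec q}}=-2\lambda\tilde{\vec q}+2s$ and apply variation of constants with the exponentially decaying input $s$, which is the correct way to conclude $\tilde{\vec q}\to 0$ and $\dot{\vec q}=s-\lambda\tilde{\vec q}\to 0$ exponentially. Your observation that this step requires $\lambda>0$ (the paper only assumes $\lambda\geq 0$) and your remark about Filippov solutions for the discontinuous $sgn$ terms are both correct and address genuine loose ends in the paper's presentation. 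Each approach buys something: the paper's Laplace-transform computation yields the explicit limiting values of $\vec q$ and $\tilde{\vec b}$ separately (the ``neighborhood of consensus''), which your argument does not produce, while your cascade argument actually proves the convergence claims about the true closed-loop trajectories.
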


\begin{proof}
\smartqed
Consider the following Lyapunov function candidate 
\begin{align}
\label{V}
    V = \frac{1}{2} s^T \vec{M} s
\end{align}
Taking derivative along dynamics and control from \eqref{EL_gen}--\eqref{tau2},
\begin{align}
   \dot{V} &= \frac{1}{2}s^T \dot{\vec{M}} s + s^T \vec{M} \dot{s}\nonumber \\
   &= s^T(-\alpha H_1 s -\beta sgn(s) - \Gamma Q sgn(s) - \vec{C}\dot{\vec{q}} - \vec{g} + \lambda \vec{M}(\dot{\vec{q}}-\dot{\Hat{\vec{b}}})) \nonumber \\
   &= -\alpha s^T H_1 s - \beta \norm{s}_1 -s^T\Gamma Q sgn(s) - s^T \vec{C}\dot{\vec{q}} -s^T\vec{g} + \lambda s^T \vec{M}(\dot{\vec{q}}-\dot{\hat{\vec{b}}})
\end{align}
Further, substituting \eqref{adaption} and using Lemmas 2 and 5, we have
\begin{align}
    \dot{V} &\leq -\alpha s^T H_1 s - \beta \norm{s} + k_g\norm{s}-s^T\Gamma Q sgn(s)-s^T \vec{C}\dot{\vec{q}} + 2\lambda s^T \vec{M} \dot{\vec{q}} \nonumber \\
    &\leq -\alpha s^T H_1 s - (\beta -k_g) \norm{s} -\sum \limits_{i=1}^{n} \gamma_i\norm{\vec{\dot{q}}_i}\norm{s_i}_1 - \sum
    \limits_{i=1}^n s_i^T\vec{C}_i\dot{\vec{q}}_i + 2\lambda \sum \limits_{i=1}^n s^T\vec{M}_i \dot{\vec{q}}_i   \nonumber \\
    &\leq -\alpha s^T H_1 s - (\beta -k_g) \norm{s} -\sum \limits_{i=1}^{n} \gamma_i\norm{\vec{\dot{q}}_i}\norm{s_i} +\sum
    \limits_{i=1}^n \norm{s_i}\norm{\vec{C}_i}\dot{\vec{q}}_i + 2\lambda\sum 
    \limits_{i=1}^n \norm{s_i}\norm{\vec{M}_i}\norm{\dot{\vec{q}}_i}\nonumber \\
    &\leq -\alpha s^T H_1 s - (\beta -k_g) \norm{s} +\sum \limits_{i=1}^{n} (k_{c_i} + 2\lambda k_{m_i}-\gamma_i)\norm{s_i}\norm{\dot{\vec{q}}_i} 
\end{align}
If we choose 
\begin{align}
\label{constants}
    \beta &> k_g \\
    \gamma_i &> k_{c_i} + 2\lambda k_{m_i}
\end{align}
We have
\begin{align}
\label{vdot}
    \dot{V} &\leq -\alpha s^T H_1 s \nonumber \\
    &\leq -\alpha \lambda_{min}(H)\norm{s}^2 
\end{align}
From \eqref{V} we have, $$ V \leq \frac{k_m}{2}\norm{s}^2 \implies \norm{s}^2 \geq \frac{2}{k_m}V$$ Substituting this in \eqref{vdot},
\begin{align}
\label{V_t}
    \dot{V} &\leq -\eta V, \quad \eta = \frac{2\alpha\lambda_{min}(H)}{k_m} \geq 0 \nonumber \\
    V(t) &\leq V(0)e^{-\eta t}
\end{align}
\eqref{V_t} implies $\lim \limits_{t \to 
\infty}V(t) \leq 0$. However, from \eqref{V} we have $V(t) \geq 0$ implying $\lim \limits_{t \to 
\infty}V(t) = 0 \implies \lim\limits_{t \to \infty}s(t) = 0$. Let the initial condition for position, velocity and bias be given by $\vec{q}(0), \dot{\vec{q}}(0)$ and $\tilde{\vec{b}}(0)$ respectively. Using \eqref{adaption} and the fact that $\lim\limits_{t \to \infty}s(t) = 0$ we have,
\begin{align}
\label{s_0}
    \dot{\vec{q}} + \lambda(\vec{q} + \Tilde{\vec{b}}) &= 0
\end{align}
Solving \eqref{adaption} and \eqref{s_0} using Laplace transform we get
\begin{align}
\label{qdot}
    \vec{\dot{q}} &= -\lambda(\vec{q}(0)+\tilde{\vec{b}}(0))e^{-2\lambda t}\\
    \label{q}
    \vec{q} &= \bigg(\frac{\vec{q}(0)+\tilde{\vec{b}}(0)}{2}\bigg)e^{-2\lambda t} + \bigg(\frac{\vec{q}(0)-\tilde{b}(0)}{2}\bigg) \\
    \label{b}
    \tilde{\vec{b}} &= \bigg(\frac{\vec{q}(0)+\tilde{\vec{b}}(0)}{2}\bigg)e^{-2\lambda t} - \bigg(\frac{\vec{q}(0)-\tilde{\vec{b}}(0)}{2}\bigg)
\end{align}
Applying limit on \eqref{qdot}, \eqref{q} and \eqref{b} to analyze the asymptotic behavior :
\begin{align}
\label{limits}
    \lim \limits_{t \to \infty} \dot{\vec{q}}(t) &= 0 \\
    \lim \limits_{t \to \infty} [\vec{q}(t) &+ \tilde{\vec{b}}(t)] = 0 \\
    \lim \limits_{t \to \infty} \vec{q}(t) &= \frac{\vec{q}(0)-\tilde{\vec{b}}(0)}{2}  \\
    \lim \limits_{t \to \infty} \tilde{\vec{b}}(t) &= -\bigg(\frac{\vec{q}(0)-\tilde{\vec{b}}(0)}{2}\bigg)
\end{align}
Hence, using the proposed control law we are able to achieve exponential convergence of velocity ($\dot{\vec{q}}$) and $(\vec{q}+\tilde{\vec{b}})$ as seen from \eqref{qdot}-\eqref{b} while position and bias converges to a constant value in the neighborhood of consensus. 
\qed
\end{proof}

\section{Simulations}
In this section, simulations results are presented to validate our algorithm. We have considered one leader and five agents. All the agents are assumed to be connected to leader. The Laplacian and Adjacency matrix of the followers are given by:
\begin{align}
    \mathcal{L} &= \begin{bmatrix}
    1 & -1 & 0 & 0 & 0\\
    -1 & 2 & -1 & 0 & 0\\
    0 & -1 & 2 & -1 & 0\\
    0 &  0 & -1 & 2 & -1\\
    0 & 0 & 0 & -1 & 1
    \end{bmatrix}, \quad \quad
    \mathcal{A} = \begin{bmatrix}
    0 & 1 & 0 & 0 & 0\\
    1 & 0 & 1 & 0 & 0\\
    0 & 1 & 0 & 1 & 0\\
    0 &  0 & 1 & 0 & 1\\
    0 & 0 & 0 & 1 & 0
    \end{bmatrix}
\end{align}
The reference orbit (leader's orbit) is assumed to be circular with $r_l = 7078 km$ and mass of each follower is identical, $m_i = 1 kg$. The initial relative position of the followers is randomly chosen to lie between $[0,9]m$ while the initial relative velocities lie in $[0,6]m/s$. The true bias lies in the range of $[-1,2]m$ for each coordinate of agents. The initial estimate of bias for $i^{th}$ agent is initialized as, $\hat{\vec{b}_i} = (\vec{b}_i-1) m$. The constants $\alpha$, $\lambda$, $\beta_i$ and $\gamma_i$ are chosen to be 1, 0.5, 20.2 and 3.12 respectively. 

Fig. \ref{pb} shows time variation of the compensated biased relative positions of the followers. It is evident by this figure that $\vec{q} + \vec{\tilde{b}} \rightarrow 0$ exponentially for all the agents. From Fig. \ref{pos} we can observe that the bounded trajectories are obtained for all the followers in the neighborhood of leader's orbit asymptotically. This bound on the trajectory depends on the initial value of the biased position. Fig. \ref{vel} shows the relative velocities of the followers with respect to leader. It can be seen that velocity for all the followers approach to that of leader exponentially. 
\begin{figure}
\includegraphics[scale=0.52]{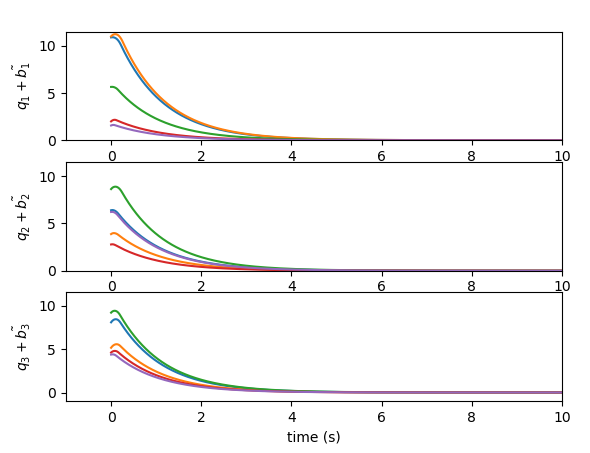}
\caption{$[\vec{q}(t) + \tilde{\vec{b}}(t)](m)$ vs time ($s$). The sum of position and $\tilde{\vec{b}}$ exponentially converges to leader's trajectory}
\label{pb}       
\end{figure}

\begin{figure}
\includegraphics[scale=0.52]{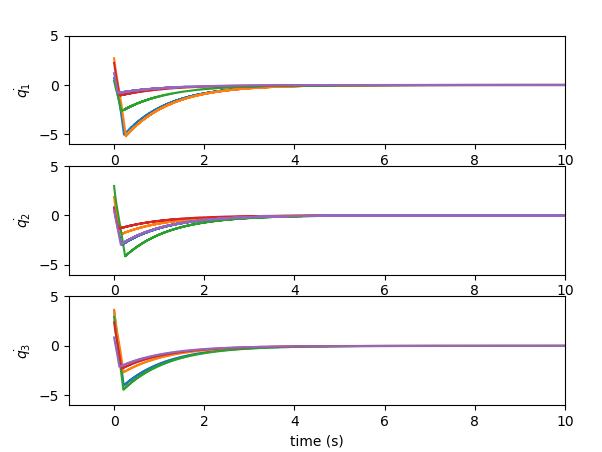}
\caption{$\dot{\vec{q}}(t)(m/s)$ vs time ($s$). The velocity of all agents converges exponentially to the leader's velocity}
\label{vel}       
\end{figure}

\begin{figure}
\includegraphics[scale=.52]{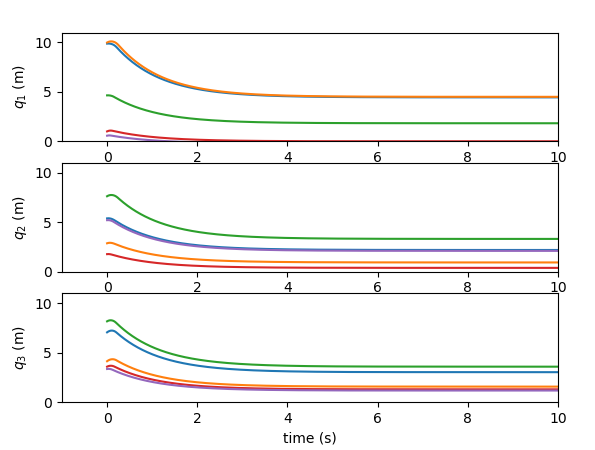}
\caption{$\vec{q}(t) $ vs time. Position of the followers converges to a constant value in the neighborhood of the leader's trajectory}
\label{pos}       
\end{figure}

\section{Conclusion}
A distributed model independent algorithm is proposed in this paper for an undirected connected network governed by Euler-Lagrange dynamics with biased measurements to achieve consensus. It is shown that the velocity and  biased position with bias compensation exponentially converges to the leader's trajectory. No knowledge of upper bounds on the measurement errors are assumed in this work. To ensure stability, control gain matrices are introduced which require the knowledge of upper bounds on the inertia matrix and centrifugal matrix of the system. These requirements are reasonable as we usually know the nominal dynamics of the agents which can directly give the possible bound on these quantities. This algorithm can be easily extended to any Euler-Lagrange system. Simulations are provided to show the effectiveness of this algorithm. In future work, modification in the proposed algorithm will be investigated to reduce the bound on consensus errors. Moreover, actuator saturation also needs to be taken into account.


\begin{thebibliography}{99.}%


\bibitem{tandon} Devyesh Tandon, Srikant Sukumar.: Rigid Body Consensus Under Relative Measurement Bias. In: AAS Spaceflight Mechanics meeting, At San Antonio, TX, USA, February 2017. Paper number: AAS 15-601. Available via ResearchGate.

\bibitem{china} Fangya Gao, Yingmin Jia.:Distributed Finite-Time Coordination Control for 6DOF Spacecraft Formation Using Nonsingular Terminal Sliding Mode. In: Proceedings of the 2015 Chinese Intelligent
Systems Conference, Lecture Notes in Electrical Engineering 359, pp. 195-204. Springer, Heidelberg (2016). DOI 10.1007/978-3-662-48386-2\_21

\bibitem{ext_dist} Lipo Mo, Tingting Pan, Shaoyan Guo and Yuguang Niu.: Distributed Coordination Control of First- and Second-Order Multiagent Systems with External Disturbances. In: Hindawi Publishing Corporation Mathematical Problems in Engineering, Volume 2015, Articled ID 913689. Available at : \url{http://dx.doi.org/10.1155/2015/913689}

\bibitem{anderson}Mengbin Ye, Brian D.O. Anderson, Changbin Yu.: Leader Tracking of Euler-Lagrange Agents on Directed Switching Networks Using A Model-Independent Algorithm. In: Cornell University Library. Available at:\url{arXiv preprint arXiv:1802.00906}

\bibitem{mrac} Parag Patre and Suresh M. Joshi.:Accommodating sensor bias in mrac for state tracking. In: AIAA Guidance, Navigation, and Control Conference, 2011

\bibitem{rac_book}	Petros A. Ioannou, 	Jing Sun.: Robust adaptive control, Prentice-Hall, Inc. Upper Saddle River, NJ, USA, 1995.

\bibitem{singla} Puneet Singla, Kamesh Subbarao, john L.Junkins.: Adaptive output feedback control for spacecraft rendezvous and docking under measurement uncertainty. In: Journal of Guidance, Control, and Dynamics, Vol.29, No.4, pp.892–902, July-August 2006

\bibitem{only_pos} Qingkai Yang, Fengyi Zhou, Jie Chen, Xin Li and Hao Fang.: Distributed Tracking for Multiple Lagrangian Systems Using Only Position Measurements. In: Preprints of the 19th World Congress The International Federation of Automatic Control Cape Town, South Africa, August 24-29, 2014.

\bibitem{model_el} R.Kristiansen, E.I.Grotli, P.J. Nicklasson and J.T. Gravdahl.: A model of relative translation and rotation in leader-follower spacecraft formations. In: Modeling, Identification and Control, Vol. 28, No. 1, 2007, pp.3-13.

\bibitem{slotine}Soon-Jo Chung, Umair Ahsun and Jean-Jacques E. Slotine.: Application of Synchronization to Formation Flying Spacecraft: Lagrangian Approach. In: AIAA Journal of Guidance Control and Dynamics, Vol. 32, No.2, March-April 2009.

\bibitem{krogstad} Thomas R. Krogstad and Jan Tommy Gravdahl.: 6-DOF mutual synchronization of formation flying spacecraft. In: Proceedings of the 45$^{th}$ IEEE Conference on Decision \& Control,San Diego, CA, USA, December 13-15, 2006.

\bibitem{weiren} Wei Ren, Yongcan Cao.: Networked Lagrangian Systems. In: Distributed Coordination of Multi-agent Networks , pp. 148-183. Springer, Heidelberg (2011)

\bibitem{wr_prelim} Wei Ren, Yongcan Cao.: Distributed Coordination of Multi-agent Networks , pp. 03-21. Springer, Heidelberg (2011)

\bibitem{nl_wang}Xiangyu Wang and Shihua Li.: Nonlinear consensus algorithms for second-order multi-agent systems with mismatched disturbances. In: 2015 American Control Conference, Palmer House Hilton Chicago, IL, USA. July 1-3, 2015

\bibitem{JPL_page} Precision Formation Flying. NASA Jet Propulsion Laboratory, California Institute of Technology.Available at: \url{https://scienceandtechnology.jpl.nasa.gov/precision-formation-flying}




\end{thebibliography}
\end{document}